\providecommand{\U}[1]{\protect\rule{.1in}{.1in}}
\providecommand{\U}[1]{\protect\rule{.1in}{.1in}}
\newtheorem{theorem}{Theorem}[section]
\newtheorem{remark}[theorem]{Remark}
\newtheorem{definition}[theorem]{Definition}
\newtheorem{theoremnew}{Theorem}[subsection]
\newtheorem{propositionnew}{Proposition}[subsection]
\begin{document}

\title{When is the Haar measure a Pietsch measure for nonlinear mappings?}

\author{G. Botelho\thanks{Supported by CNPq Grant 306981/2008-4 and Fapemig Grant
PPM-00295-11.}\,\,, D. Pellegrino\thanks{Supported by CNPq Grant
301237/2009-3}\,\,, P. Rueda\thanks{Supported by Ministerio de Ciencia e
Innovaci\'{o}n MTM2011-22417.}\,\,, J. Santos and\\J. B. Seoane-Sep\'{u}lveda\thanks{Supported by by the Spanish Ministry of
Science and Innovation, grant MTM2009-07848.\hfill\newline2010 Mathematics
Subject Classification: 28C10, 47B10.}}
\date{}

\maketitle

\begin{abstract}
We show that, as in the linear case, the normalized Haar measure on a compact topological
group $G$ is a Pietsch measure for nonlinear summing mappings on closed translation invariant subspaces of $C(G)$. This answers a question posed to the authors by J. Diestel. We also show that our result applies to several well-studied  classes of nonlinear summing mappings. In the final section some problems are proposed.
\end{abstract}

%\title[On the Pietsch measure]{On the Pietsch measure for absolutely summing mappings on compact topological groups}
%\author{G. Botelho, D. Pellegrino, P. Rueda, J. Santos and J. Seoane-Sep\'{u}lveda}
%\maketitle

\section{ Introduction}

The Haar measure on a compact topological group $G$ is simply a Radon measure $\sigma_G$
on the Borel sets of $G$ which is translation invariant, that is, $\sigma_G(gB)=\sigma_G(B)$ for every Borel set $B$ and every $g\in G$. A well-known fact,
essentially proved by A. Haar \cite{ha} in 1933 (see also \cite{ca, Neu, Ru,
we}), is that there is only one normalized Haar measure on $G$. In the same
year (in fact in the same issue of Annals of Mathematics), J. von Neumann
\cite{vn} used Haar's Theorem to solve Hilbert's fifth problem in the case of compact groups. The uniqueness of the normalized Haar measure  be used later
in this paper.

A cornerstone in the theory of absolutely summing operators, the Pietsch
Domination Theorem asserts that a continuous linear operator $v\colon
X_{1}\longrightarrow X_{2}$ between Banach spaces is absolutely $p$-summing if
and only if there is a constant $C>0$ and a Borel probability measure $\mu$ on
the closed unit ball of the dual of $X_{1},$ $\left(  B_{X_{1}^{\ast}}%
,\sigma(X_{1}^{\ast},X_{1})\right)  ,$ such that%
\begin{equation}
\left\Vert v(x)\right\Vert \leq C \cdot\left(  \int_{B_{X_{1}^{\ast}}}\left\vert
\varphi(x)\right\vert ^{p}d\mu(\varphi)\right)  ^{\frac{1}{p}} \label{gupdt}%
\end{equation}
for every $x\in X_{1}$. Such a measure $\mu$ is said to be a \textit{Pietsch
measure for }$v$.

When $X_{1}=C(K)$ for some compact Hausdorff space, a simple reformulation of
the Pietsch Domination Theorem tells us that a continuous linear operator
$v\colon C(K)\longrightarrow X_{2}$ is absolutely $p$-summing if and only if
there is a constant $C>0$ and a Borel probability measure $\mu$ on the Borel
sets of $K$ such that%
\[
\left\Vert v(f)\right\Vert \leq C\cdot\left(  \int_{K}\left\vert f(z)\right\vert
^{p}d\mu(z)\right)  ^{\frac{1}{p}}%
\]
for every $f\in C(K)$. The measure $\mu$ above is also called a
\textit{Pietsch measure for }$v$.

The Pietsch Domination Theorem has analogs in different contexts, including
versions for classes of absolutely summing nonlinear operators (see, for
example, \cite{achour, cha, CDo, Dimant, FaJo, SP, joed}). Recently, in
\cite{BoPeRu, PeSa, adv} the concept of \textit{abstract $R$-$S$-abstract
$p$-summing mapping} was introduced in such a way that several previous known
versions of the Pietsch Domination Theorem can be regarded as particular
instances of one single result.

It is interesting to mention that in most of the cases we have almost no structural
information on the Pietsch measures and, as mentioned in \cite[p.\,56]{DiJaTo}, ``\textit{in general its existence is accessible only by
transfinite means}''. Nevertheless in the important case when
$X_{1}=C(G)$ and $G$ is a compact Hausdorff topological group the precise nature of the Pietsch
measure is known: motivated by results from \cite{Gordon, Pelc}, in
\cite[p.\,56]{DiJaTo} it is proved that if $G$ is a compact topological
group, then the normalized Haar measure on $G$ is a Pietsch measure for any
translation invariant $p$-summing linear operator on a closed translation
invariant subspace of $C(G)$ that separates points of $G$ (cf. Theorem \ref{tu}). The particular case in which $G$ is the circle
group $\left\{  z\in\mathbb{C}:\left\vert z\right\vert =1\right\}  $ is used
in \cite{K}.

J. Diestel posed to the authors the question whether, as in the linear case, in
this more general context of \cite{BoPeRu, PeSa} the normalized Haar measure on a compact topological
group $G$ is still a Pietsch measure for any translation invariant
(not necessarily linear) $R$-$S$-abstract $p$-summing mapping on a closed
invariant subspace of $C(G)$. The precise meaning of the terms aforementioned
used shall be clear in the forthcoming section.

In this paper we solve Diestel's question in the positive. We show in Section
\ref{results} that the answer is affirmative provided two natural and general
conditions are satisfied (cf. Theorem \ref{genHaar}). In Section \ref{applications} we show that several usual classes of $R$-$S$-abstract summing (linear and non-linear)
mappings enjoy these two conditions, confirming that Diestel's question has a
positive answer in several important cases. In particular, we improve the original linear result from \cite{DiJaTo} showing that the assumption that $F$ separates points of $G$ can be dropped (cf. Theorem \ref{tugen}). However there are still some open
questions which we detail in the final section.

\section{Preliminaries and background}

Henceforth $G$ denotes a (non necessarily abelian) compact Hausdorff
topological group. The operation on $G$ shall be denoted as multiplicative. The
symbol $C(G)$ stands for the Banach space of continuous functions $f \colon
G\longrightarrow\mathbb{K}$, where $\mathbb{K} = \mathbb{R}$ or $\mathbb{C}$,
endowed with the usual sup norm.

\begin{definition}\rm (a) A non empty set $F\subset C(G)$ is \textit{closed (left)
translation invariant} if%
\[
\{T^{\phi}:T\in F~and~\phi\in G\}\subset F,
\]
where $T^{\phi}(\varphi):=T(\phi\varphi).$\newline(b) Let $X$ be any set and $F$
be a closed translation invariant subset of $C(G)$. A map $u\colon
F\longrightarrow X$ is \textit{translation invariant} if $u(T)=u(T^{\phi})$
for all $\phi\in G$ and $T\in F$.\\
(c) A set $F \subset C(G)$ is said to {\it separate points} of $G$ if for all $x, y \in G$, $x \neq y$, there exists $T \in F$ such that $T(x) \neq T(y)$.
\end{definition}

The following version of the Pietsch Domination Theorem appears in \cite[p.\,56]{DiJaTo} (by $\pi_p(u)$ we denote the $p$-summing norm of the operator $u$):

\begin{theorem}
\label{tu} If $X$ is a Banach space, $G$ is a compact Hausdorff topological
group, $F$ is a closed translation invariant subspace of $C(G)$ that separates points of $G$, and $u \colon
F\longrightarrow X$ is a translation invariant $p$-summing linear operator,
then the normalized Haar measure $\sigma_G$ on $G$ is a Pietsch measure for $u$ in the sense that
$$\|u(f)\| \leq \pi_p(u) \cdot \left(\int_G |f(x)|^p \,d\sigma_G(x)\right)^\frac1p {\rm ~for~every~} f \in F. $$
\end{theorem}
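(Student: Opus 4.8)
The plan is to begin from an arbitrary Pietsch measure furnished by the linear Pietsch Domination Theorem and then to ``symmetrize'' the resulting inequality against the group, exploiting the translation invariance of $u$ together with the invariance of $\sigma_G$. Concretely, since $u$ is $p$-summing there is a regular Borel probability measure $\mu$ on $(B_{F^*}, w^*)$ with $\|u(f)\| \le \pi_p(u)\left(\int_{B_{F^*}} |\varphi(f)|^p\, d\mu(\varphi)\right)^{1/p}$ for all $f \in F$. Write $f^x$ for the translate $f^x(g) = f(xg)$, which lies in $F$ because $F$ is closed translation invariant; note that left translation is an isometry of $C(G)$, so $\|f^x\| = \|f\|$.

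First I would invoke the translation invariance of $u$, namely $u(f) = u(f^x)$ for every $x \in G$, to write $\|u(f)\| = \|u(f^x)\|$ and apply the domination inequality to each translate $f^x$. Raising to the $p$-th power and integrating in $x$ against the normalized (hence probability) Haar measure $\sigma_G$ leaves the left-hand side unchanged, yielding $\|u(f)\|^p \le \pi_p(u)^p \int_G \int_{B_{F^*}} |\varphi(f^x)|^p\, d\mu(\varphi)\, d\sigma_G(x)$. Since $(x,\varphi) \mapsto \varphi(f^x)$ is continuous on the compact space $G \times (B_{F^*}, w^*)$, Fubini's theorem permits exchanging the two integrals, reducing everything to the single scalar estimate $\int_G |\varphi(f^x)|^p\, d\sigma_G(x) \le \int_G |f(z)|^p\, d\sigma_G(z)$, needed for each fixed $\varphi \in B_{F^*}$.

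The crux is this last inequality, and it is where the argument is most delicate, since $\varphi$ is a genuine functional rather than a point evaluation. I would extend $\varphi$ by Hahn--Banach to a norm-$\le 1$ functional on $C(G)$ and represent it, via the Riesz theorem, by a regular Borel measure $\nu$ on $G$ with $|\nu|(G) \le 1$, so that $\varphi(f^x) = \int_G f(xg)\, d\nu(g)$ displays the convolution structure that makes the estimate run. Applying Jensen's inequality with respect to the sub-probability measure $|\nu|$ (here both $|\nu|(G) \le 1$ and $p \ge 1$ are used), then Fubini, and finally the invariance of $\sigma_G$ under the substitution $z = xg$, I obtain $\int_G |\varphi(f^x)|^p\, d\sigma_G(x) \le \int_G |f(z)|^p\, d\sigma_G(z)$. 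Substituting this back and using $\mu(B_{F^*}) = 1$ gives the claim after taking $p$-th roots.

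I expect the main obstacle to be precisely the step concerning the general functional $\varphi$: one must descend from the abstract Pietsch measure on $B_{F^*}$ to a genuine estimate on $G$, and the honest route is the Riesz representation together with the convolution/Jensen computation above. Two points deserve care: that the Haar measure on a compact group is simultaneously left and right invariant (unimodularity), which is what licenses the substitution $z = xg$, and that the chain of Fubini applications is justified by the joint continuity and boundedness of the integrand. It is worth remarking that this argument never uses that $F$ separates the points of $G$; that hypothesis, present in \cite{DiJaTo}, is therefore already superfluous in the linear case, in agreement with Theorem \ref{tugen}.
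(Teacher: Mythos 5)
Your proof is correct (granting $p\ge 1$, which is the standard setting for $p$-summing operators and for the cited result of Diestel--Jarchow--Tonge), but it follows a genuinely different route from the paper's. The paper does not symmetrize the inequality: it deduces Theorem \ref{tu} --- in fact the stronger Theorem \ref{tugen}, without the separation hypothesis --- from the abstract Theorem \ref{genHaar}. There, the Pietsch measure is first transported to $G$ itself via the point-mass identity $\sup_{\alpha\in B_{F^*}}\sum_j|\alpha(T_j)|^p=\sup_{x\in G}\sum_j|T_j(x)|^p$, and then one symmetrizes the \emph{measure}: the translates $\mu_\phi$, defined by $\langle\mu_\phi,T\rangle=\langle\mu,T^\phi\rangle$, are shown to be Pietsch measures again, their weak*-average $\nu=\int_G\mu_\phi\,d\sigma_G(\phi)$ is a translation-invariant probability Pietsch measure, and the uniqueness of the normalized Haar measure forces $\nu=\sigma_G$. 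You instead symmetrize the \emph{inequality}: you keep the Pietsch measure on $B_{F^*}$, average the domination inequality over the translates $f^x$, and dispose of a general functional $\varphi$ by Hahn--Banach extension, Riesz representation, Jensen/H\"older with respect to the sub-probability $|\nu|$, Fubini, and the bi-invariance of $\sigma_G$; your key scalar estimate $\int_G|\varphi(f^x)|^p\,d\sigma_G(x)\le\int_G|f|^p\,d\sigma_G$ is sound, and the Fubini exchanges are justified as you say by joint continuity on the compact product. What your route buys: it is self-contained in the linear case, needs neither the uniqueness theorem for the Haar measure nor the weak*-valued integration used to build the averaged measure, and it makes transparent that the separation hypothesis of \cite{DiJaTo} is superfluous, in agreement with Theorem \ref{tugen}. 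What the paper's route buys: averaging the measure uses nothing about $R$ and $S$ beyond hypotheses (ii)--(iii) of Theorem \ref{genHaar} and works for all $0<p<\infty$, so it covers the nonlinear $R$-$S$-abstract summing classes that are the actual subject of the paper; your convolution/Jensen step leans essentially on the linearity of $\varphi$ (via Riesz representation) and on $p\ge 1$ (the direction of Jensen's inequality reverses for $p<1$), so it does not extend to that generality.
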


\begin{remark}\rm The assumption that $F$ separates points of $G$ is missing in \cite{DiJaTo}. To understand where this assumption is needed in the proof provided by  \cite{DiJaTo}, see Subsection \ref{linearcase}. In that same subsection we provide a proof that does not require this extra assumption.
\end{remark}

%%%%%%%%%%%%%%%%%%%%%%
Next, we describe the general Pietsch Domination Theorem proved in
\cite{BoPeRu,PeSa}. Let $X$, $Y$ and $E$ be (arbitrary) sets, and $\mathcal{H}$ be
a family of mappings from $Y$ to $X.$ Let also $Z$ be a Banach space and $K$
be a compact Hausdorff topological space. Assume that $S\colon{\mathcal{H}%
}\times E\times Z\longrightarrow\lbrack0,\infty)$ is an arbitrary map and
$R\colon K\times E\times Z\longrightarrow\lbrack0,\infty)$ is so that%

\[
R_{x,b}(\varphi):=R(\varphi,x,b)
\]
is continuous on $K$ for all $(x,b)\in E\times Z.$ For $0<p<\infty,$ a mapping
$f\in{\mathcal{H}}$ is said to be \textit{$R$-$S$-abstract $p$-summing} if
there is a constant $C>0$ so that%
\[
\left(  \sum_{j=1}^{m}S(f,x_{j},b_{j})^{p}\right)  ^{\frac{1}{p}}\leq C\cdot\left(
\sup_{\varphi\in K}\sum_{j=1}^{m}R\left(  \varphi,x_{j},b_{j}\right)
^{p}\right)  ^{\frac{1}{p}},
\]
for all $\left(  x_{j},b_{j}\right)  \in E\times Z,$ $j=1,\ldots,m$ and
$m\in\mathbb{N}$. The infimum of such constants $C$ is denoted by $\pi
_{RS,p}(f)$. The Unified Pietsch Domination Theorem \cite{BoPeRu,PeSa} reads as follows:

\begin{theorem}
\label{gen} Let $R$ and $S$ be as above, $0<p<\infty$ and $f\in{\mathcal{H}}$.
Then $f$ is $R$-$S$-abstract $p$-summing if and only if there is a constant
$C>0$ and a regular Borel probability measure $\mu$ on $K$ such that%
\begin{equation}
S(f,x,b)\leq C\cdot\left(  \int_{K}R\left(  \varphi,x,b\right)  ^{p}d\mu\left(
\varphi\right)  \right)  ^{\frac{1}{p}} \label{2}%
\end{equation}
for all $(x,b)\in E\times Z.$ Moreover, the infimum of such constants $C$
equals $\pi_{RS,p}(f)$. Such a measure $\mu$ is called a $R$-$S$-abstract
measure for $f$.
\end{theorem}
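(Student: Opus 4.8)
The plan is to prove both implications, the easy one directly and the substantial one by a minimax argument carried out on the weak-$*$ compact convex set of probability measures on $K$. First I would dispose of sufficiency: assuming the domination \eqref{2} with constant $C$ and probability measure $\mu$, I raise it to the $p$-th power and sum over any finite family $(x_j,b_j)_{j=1}^m$, obtaining
\[
\sum_{j=1}^m S(f,x_j,b_j)^p \le C^p \int_K \sum_{j=1}^m R(\varphi,x_j,b_j)^p\, d\mu(\varphi) \le C^p \sup_{\varphi\in K}\sum_{j=1}^m R(\varphi,x_j,b_j)^p,
\]
where the last step uses $\mu(K)=1$; taking $p$-th roots shows $f$ is $R$-$S$-abstract $p$-summing with $\pi_{RS,p}(f)\le C$, so $\pi_{RS,p}(f)\le\inf C$.

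For the converse, write $\pi:=\pi_{RS,p}(f)$ and let $\mathcal P$ be the set of regular Borel probability measures on $K$; by the Riesz representation theorem together with Banach--Alaoglu it is a weak-$*$ compact convex subset of $C(K)^*$. To each finite family $\mathbf a=((x_j,b_j))_{j=1}^m$ I attach the function $g_{\mathbf a}\in C(K)$ given by $g_{\mathbf a}(\varphi):=\sum_{j=1}^m\big(S(f,x_j,b_j)^p-\pi^p R(\varphi,x_j,b_j)^p\big)$, which is continuous precisely because each $R_{x_j,b_j}$ is, and the affine weak-$*$ continuous functional $h_{\mathbf a}\colon\mathcal P\to\mathbb R$, $h_{\mathbf a}(\mu):=\int_K g_{\mathbf a}\,d\mu$. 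The goal is to produce a single $\mu_0\in\mathcal P$ with $h_{\{(x,b)\}}(\mu_0)\le 0$ for every singleton family, since this is exactly \eqref{2} with $C=\pi$.

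The hard part will be the minimax step, and in particular the claim that every element of the convex hull $\mathcal G\subset C(K)$ of $\{g_{\mathbf a}\}$ satisfies $\min_{\varphi\in K} g\le 0$, equivalently $\inf_{\mu\in\mathcal P}\int_K g\,d\mu\le 0$. The subtlety is that the family $\{g_{\mathbf a}\}$ is stable under concatenation, $g_{\mathbf a}+g_{\mathbf b}=g_{\mathbf a\cup\mathbf b}$, and under integer repetition, $N g_{\mathbf a}=g_{\mathbf a^{N}}$, but \emph{not} under arbitrary convex combinations, because one cannot rescale the fixed numbers $S(f,x_j,b_j)$ and the fixed functions $R(\cdot,x_j,b_j)$. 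I would therefore recover a rational convex combination $\sum_i(n_i/N)g_{\mathbf a_i}$ as $\tfrac1N g_{\mathbf b}$ for the family $\mathbf b$ obtained by repeating each $\mathbf a_i$ exactly $n_i$ times; then $\min_\varphi g_{\mathbf b}(\varphi)=\sum S^p-\pi^p\sup_\varphi\sum R^p\le 0$ is nothing but the $R$-$S$-summing inequality with constant $\pi$ (and the degenerate case $\sup_\varphi\sum_j R(\varphi,x_j,b_j)^p=0$ forces all the $S$-values to vanish, so the inequality is immediate). Uniform approximation of real weights by rationals and continuity of $g\mapsto\min_\varphi g$ then cover the general $g\in\mathcal G$.

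Finally I would apply Sion's minimax theorem to the map $(g,\mu)\mapsto\int_K g\,d\mu$ on $\mathcal G\times\mathcal P$, which is affine and continuous in each variable with $\mathcal P$ compact convex, to get $\inf_{\mu\in\mathcal P}\sup_{g\in\mathcal G}\int_K g\,d\mu=\sup_{g\in\mathcal G}\inf_{\mu\in\mathcal P}\int_K g\,d\mu\le 0$. Since $\mu\mapsto\sup_{g\in\mathcal G}\int_K g\,d\mu$ is lower semicontinuous as a supremum of continuous functions and $\mathcal P$ is compact, the outer infimum is attained at some $\mu_0$ with $\sup_{g\in\mathcal G}\int_K g\,d\mu_0\le 0$; specializing to singletons yields $S(f,x,b)^p\le\pi^p\int_K R(\varphi,x,b)^p\,d\mu_0(\varphi)$ for all $(x,b)$, which is \eqref{2} with constant $\pi$. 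Together with the first paragraph this gives $\inf C=\pi=\pi_{RS,p}(f)$, completing the proof.
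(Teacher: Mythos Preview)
The paper does not prove Theorem~\ref{gen}; it is quoted as background from \cite{BoPeRu,PeSa} (``The Unified Pietsch Domination Theorem \cite{BoPeRu,PeSa} reads as follows''), so there is no in-paper proof to compare your attempt against.

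That said, your argument is correct and is essentially the standard one used in those references. The sufficiency direction is the routine computation you give. For necessity, the references typically phrase the argument via Ky Fan's lemma or a Hahn--Banach separation of the convex set $\{g_{\mathbf a}\}$ from the open positive cone in $C(K)$, rather than Sion's minimax theorem, but these are equivalent formulations of the same idea and your version goes through: the key step---that every (rational, hence by density every) convex combination of the $g_{\mathbf a}$ has nonpositive minimum because concatenation and repetition reduce it to a single instance of the $R$-$S$-summing inequality with the optimal constant $\pi$---is exactly right, and the attainment of the outer infimum by lower semicontinuity on the weak-$*$ compact $\mathcal P$ is the correct way to extract the measure $\mu_0$.
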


Thus, J. Diestel's question concerns the validity of Theorem \ref{tu} in the context
of Theorem \ref{gen}.

\section{Main result}

\label{results}

In this section we shall keep the notation of the previous one. In our main
result (Theorem \ref{genHaar} below) we have identified two natural conditions
on $R$ and $S$ under which Diestel's question has a positive answer:

\begin{theorem}
\label{genHaar}Let $R$ and $S$ be as above, $0<p<\infty$, and $u \colon Y\longrightarrow
X$ be a map in $\mathcal{H}$. Let us further assume that

{\rm (i)} $K=G$ is a compact topological group and $E$ is a closed translation invariant subspace of $C(G),$

{\rm (ii)} $R(\varphi,T^{\phi},b)\leq R(\phi\varphi,T,b)$ for all $\left(
T,b\right)  \in$ $E\times Z$ and $\varphi,\phi\in G,$ and

{\rm (iii)} The map $S_{u,b}\colon E\longrightarrow(0,\infty)~$defined by
$S_{u,b}(T)=S(u,T,b)$ is translation invariant for every $b\in Z.$

If $u$ is a $R$-$S$-abstract $p$-summing mapping, then the normalized Haar
measure on $G$ is a $R$-$S$-abstract measure for $u$.
\end{theorem}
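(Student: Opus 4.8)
The plan is to begin by invoking the Unified Pietsch Domination Theorem (Theorem \ref{gen}) to produce \emph{some} regular Borel probability measure $\mu$ on $G$ and a constant $C>0$ satisfying
$$
S(u,T,b)\leq C\cdot\left(\int_G R(\varphi,T,b)^p\,d\mu(\varphi)\right)^{\frac1p}
$$
for all $(T,b)\in E\times Z$, and then to \emph{average} this domination against the Haar measure so as to replace $\mu$ by $\sigma_G$. The engine of the argument is the interplay of the three hypotheses: (iii) lets me insert a translate into $S$ for free, (ii) transfers that translate onto the first coordinate of $R$, and the invariance of $\sigma_G$ will make the resulting inner integral independent of the translation parameter.

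Concretely, I would first apply the domination inequality with $T$ replaced by its translate $T^{\phi}$, which again lies in $E$ by (i). By (iii) the left-hand side is unchanged, $S(u,T^{\phi},b)=S(u,T,b)$, while (ii) gives $R(\varphi,T^{\phi},b)\leq R(\phi\varphi,T,b)$. Raising to the $p$-th power yields, for every $\phi\in G$,
$$
S(u,T,b)^p\leq C^p\int_G R(\phi\varphi,T,b)^p\,d\mu(\varphi).
$$
Next I would integrate both sides over $\phi\in G$ with respect to $\sigma_G$; since the left-hand side is constant in $\phi$ and $\sigma_G$ is a probability measure, it is unaffected, and Fubini's theorem—legitimate because each $R_{x,b}$ is continuous on the compact space $G$, hence the integrand is bounded and jointly measurable on $G\times G$—lets me interchange the order of integration.

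The crux is then to evaluate the inner integral $\int_G R(\phi\varphi,T,b)^p\,d\sigma_G(\phi)$ for fixed $\varphi$. Here the map $\phi\mapsto\phi\varphi$ is a \emph{right} translation, so I need $\sigma_G$ to be right invariant; this is precisely where the uniqueness of the normalized Haar measure enters, since on a compact group the right translate of the normalized left Haar measure is again a normalized left Haar measure and must therefore coincide with $\sigma_G$ (that is, $G$ is unimodular). Consequently the inner integral equals $\int_G R(\psi,T,b)^p\,d\sigma_G(\psi)$, independently of $\varphi$, and pulling this constant out of the probability measure $\mu$ collapses the double integral to the single integral. This gives $S(u,T,b)^p\leq C^p\int_G R(\psi,T,b)^p\,d\sigma_G(\psi)$, so that $\sigma_G$ is an $R$-$S$-abstract measure for $u$; a closing remark that $C$ may be taken arbitrarily close to $\pi_{RS,p}(u)$ then shows the constant is sharp. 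The main obstacle I anticipate is purely the bookkeeping of left versus right invariance—ensuring that the translate produced by hypothesis (ii) sits on the side on which $\sigma_G$ is invariant—together with the clean justification of Fubini.
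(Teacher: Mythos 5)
Your proof is correct, and it takes a genuinely more direct route than the paper's. Both arguments begin identically: take $\mu$ from Theorem \ref{gen}, apply the domination to $T^{\phi}$ (allowed by hypothesis (i)), use (iii) to keep the left-hand side fixed and (ii) to shift the translation onto $R$, and then average over $\phi$ against $\sigma_G$. The difference is in how the average is processed. The paper defines $\mu_{\phi}\in C(G)^{*}$ by $\langle\mu_{\phi},T\rangle=\langle\mu,T^{\phi}\rangle$, forms the weak*-integral $\nu$ with $\langle\nu,T\rangle=\int_G\langle\mu_{\phi},T\rangle\,d\sigma_G(\phi)$, verifies that $\nu$ is a regular Borel probability measure dominating $u$ and that $\nu$ is translation invariant, and finally invokes the uniqueness of the normalized Haar measure to conclude $\nu=\sigma_G$. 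You instead collapse the double integral $\int_G\int_G R_{T,b}(\phi\varphi)^{p}\,d\mu(\varphi)\,d\sigma_G(\phi)$ on the spot: Fubini (legitimate, since $(\phi,\varphi)\mapsto R_{T,b}(\phi\varphi)^{p}$ is continuous, hence bounded and product-measurable on the compact $G\times G$) plus right invariance of $\sigma_G$ reduce it to $\int_G R_{T,b}^{p}\,d\sigma_G$, the integral over $\mu$ then disappearing because $\mu(G)=1$. The one extra ingredient you need---that the left Haar measure of a compact group is also right invariant---is exactly what the uniqueness of the normalized Haar measure delivers ($B\mapsto\sigma_G(Bg)$ is a normalized left-invariant Borel measure, hence equals $\sigma_G$), and you flag and justify this unimodularity point correctly. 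What your route buys: no vector-valued (weak*) integration and none of the verifications attached to the auxiliary measure $\nu$ (positivity, normalization, translation invariance). What the paper's route buys: it identifies the averaged measure itself as $\sigma_G$, a slightly stronger piece of information, though unnecessary for the statement. Your closing remark on the constant is also sound: each admissible $C$ survives the averaging unchanged, so the domination by $\sigma_G$ holds for every $C>\pi_{RS,p}(u)$, and hence with $\pi_{RS,p}(u)$ after passing to the limit.
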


\begin{proof}
By Theorem \ref{gen} there exists a Borel probability measure $\mu$ on $G$
such that
\[
S(u,T,b)\leq\pi_{RS,p}(u)\cdot\left(  \int_{G}R(\varphi,T,b)^{p}\,d\mu
(\varphi)\right)  ^{1/p}
\]
for every $(T,b) \in E \times Z$. For each $\phi\in G$ define $\mu_{\phi}\in C(G)^{\ast}$ by
\[
\langle\,\mu_{\phi},T \,\rangle :=\langle\,\mu,T^{\phi} \,\rangle
\]
for every $T\in C(G)$. Indeed
$\mu_{\phi}$ is a probability measure as
\[
\langle\,\mu_{\phi},1_{G}\,\rangle= \langle\,\mu,1_{G}^{\phi} \,\rangle=\int_{G}1_{G}^{\phi}\,d\mu=\mu(G)=1,
\]
where $1_{G}$ denotes the constant mapping taking the value $1$. Now we
prove that $\mu_{\phi}$ is a $R$-$S$-abstract measure for $u$: given $(T,b) \in E \times Z$,
\begin{align*}
S(u,T,b)^{p}  &  =S_{u,b}(T)^{p} =S_{u,b}(T^{\phi})^{p}  =S(u,T^{\phi},b)^{p}\\
&  \leq\pi_{RS,p}(u)^{p}\cdot\int_{G}R(\varphi,T^{\phi},b)^{p}\,d\mu(\varphi)\\
&  \leq\pi_{RS,p}(u)^{p}\cdot\int_{G}R(\phi\varphi,T,b)^{p}\,d\mu(\varphi)\\
&  =\pi_{RS,p}(u)^{p}\cdot\int_{G}R_{T,b}(\phi\varphi)^{p}\,d\mu(\varphi)\\
&  =\pi_{RS,p}(u)^{p}\cdot\int_{G}R_{T,b}^{\phi}(\varphi)^{p}\,d\mu(\varphi)\\
&  =\pi_{RS,p}(u)^{p}\langle\,\mu,\left(  R_{T,b}^{p}\right)  ^{\phi}\rangle  =\pi_{RS,p}(u)^{p}\langle\,\mu_{\phi},R_{T,b}^{p}\rangle\\
&  =\pi_{RS,p}(u)^{p}\cdot\int_{G}R(\varphi,T,b)^{p}\,d\mu_{\phi}(\varphi).
\end{align*}
Let $\sigma_{G}$ denote the normalized Haar measure on $G$. As the map
$\phi\mapsto\mu_{\phi}$ is continuous when $C(G)^{\ast}$ is endowed with the
weak*-topology, there exists a measure $\nu\in C(G)^{\ast}$ such that
\[
\langle\,\nu,T\, \rangle=\int_{G}\langle\,\mu_{\phi},T \,\rangle\,d\sigma_{G}(\phi)
\]
for every $T \in C(G)$ (see \cite[p.\,57]{DiJaTo}). If $T\in E$, $T\geq0$, then
\[
\langle\,\nu,T \,\rangle=\int_{G}\langle\,\mu_{\phi},T\,\rangle d\sigma_{G}(\phi)=\int_{G}\langle\,\mu,T^{\phi
}\,\rangle\,d\sigma_{G}(\phi)\geq0,
\]
so $\nu$ is a non-negative regular Borel measure on $G$. Moreover, the following calculation shows that $\nu$ is a probability:
\[
\langle\,\nu,1_{G}\rangle=\int_{G}\langle\,\mu_{\phi},1_{G}\rangle\,d\sigma_{G}(\phi)=\int_{G}%
1_{G}\,d\sigma_{G}=\langle\,\sigma_{G},1_{G}\rangle=1.
\]
Since%
\[
\frac{S(u,T,b)^{p}}{\pi_{RS,p}(u)^{p}}\leq\int_{G}R_{T,b}(\varphi)^{p}%
\,d\mu_{\phi}(\varphi)=\langle\,\mu_{\phi},R_{T,b}^{p}\,\rangle,
\]
for every $(T, b) \in E \times Z$, it follows that
\begin{align*}
\frac{S(u,T,b)^{p}}{\pi_{RS,p}(u)^{p}}  &  =\frac{S(u,T,b)^{p}}{\pi
_{RS,p}(u)^{p}}\int_{G}1_{G}\,d\sigma_{G}  =\int_{G}\frac{S(u,T,b)^{p}}{\pi_{RS,p}(u)^{p}}1_{G}\,d\sigma_{G}\\
&  \leq\int_{G}\langle\,\mu_{\phi},R_{T,b}^{p}\,\rangle\,d\sigma_{G}(\phi)  =\langle\,\nu,R_{T,b}^{p}\,\rangle  =\int_{G}R_{T,b}(\phi)^{p}\,d\nu(\phi),
\end{align*}
for every $(T, b) \in E \times Z$. Therefore $\nu$ is a $R$-$S$-abstract measure for $u$. Next we prove that $\nu$ is translation invariant: given $T \in C(G)$ and $\phi_0 \in G$,
\begin{align*}
\langle\,\nu,T^{\phi_{0}}\,\rangle  &  =\int_{G}\langle\,\mu_{\phi},T^{\phi_{0}}\,\rangle\, d\sigma_{G}%
(\phi)  =\int_{G}\int_{G} T^{\phi_{0}}(\varphi)\, d\mu_{\phi}(\varphi)\,
d\sigma_{G}(\phi)\\
&  =\int_{G}\int_{G} T(\phi_{0}\phi\varphi)\, d\mu(\varphi)\, d\sigma_{G}%
(\phi)  =\int_{G}\int_{G} T(\phi_{0}\phi\varphi)\, d\sigma_{G}(\phi)\, d\mu
(\varphi)\\
&  =\int_{G}\int_{G} T(\phi\varphi)\, d\sigma_{G}(\phi)\, d\mu(\varphi)  =\int_{G}\int_{G} T(\phi\varphi)\, d\mu(\varphi)\, d\sigma_{G}(\phi)\\
&  =\int_{G}\int_{G} T(\varphi)\, d\mu_{\phi}(\varphi)\, d\sigma_{G}(\phi)  =\int_{G}\langle\,\mu_{\phi},T\,\rangle\, d\sigma_{G}(\phi)  = \langle\,\nu,T\,\rangle.
\end{align*}
By the uniqueness of the normalized Haar measure $\sigma_{G}$ we conclude that
$\nu=\sigma_{G}$.
\end{proof}

Note that the condition of translation invariance is imposed to the map
$S_{u,b}(\cdot):=S(u,\cdot,b)$ instead of to the map $u.$ Thus, although in the
applications we always have the information that $u$ is translation invariant,
\textit{a priori} our abstract result does not need this hypothesis. An
extremal example shows that in fact this choice seems to be adequate: if $S$
is the null mapping then obviously no hypothesis on $u \colon Y\longrightarrow X$ is needed.

\section{Applications\label{appp}}

\label{applications} In this section we show that Theorem \ref{genHaar}
applies to several usual classes of $R$-$S$- abstract summing mappings, including some well-studied classes of (linear and nonlinear) absolutely summing mappings.

We shall use several times that if $G$ is a compact Hausdorff space and $F$ is a closed subspace of $C(G)$, then
$$\sup_{\alpha \in B_{F^*}} \sum_{j=1}^m |\alpha(T_j)|^p = \sup_{\alpha \in B_{C(G)^*}} \sum_{j=1}^m |\alpha(T_j)|^p = \sup_{x \in G} \sum_{j=1}^m |T_j(x)|^p,$$
for all $T_1, \ldots, T_m \in F$. The first equality follows from the Hahn--Banach Theorem and the second follows from a canonical argument using {\it point masses} (cf. \cite[page 41]{DiJaTo}).
%%%%%%%%%%%%%%%%%%%%%%%%%%%%%%%%%%%%%%%%%%%%%%%%%%%%%%%%
%\begin{lemma} Let $K$ be a compact Hausdorff space and let $F$ be a closed subspace of $C(K)$ that separates points of $K$. If $X$ is a Banach space, the following are equivalent for a mapping $u \colon F \longrightarrow X$:\\
%{\rm (a)} There exists a constant $C$ and a Borel probability measure $\mu$ on a weak* compact norming subset $W$ of $B_{F^*}$ such that
%$$\|u(f)\| \leq C \left( \int_W |\varphi(f)|^p d\mu(\varphi)\right) ^{\frac{1}{p}}{\rm ~for~every~} f \in F. $$
%{\rm (b)} There exists a constant $C$ and a Borel probability measure $\nu$ on $K$ such that
%$$\|u(f)\| \leq C \left( \int_K |f(z)|^p d\nu(z)\right) ^{\frac{1}{p}}{\rm ~for~every~} f \in F. $$
%Both measures $\mu$ and $\nu$ are called Pietsch measures for $u$.
%\end{lemma}

\subsection{Absolutely summing linear operators}\label{linearcase}

Let us see that, for linear operators, Theorem \ref{genHaar} recovers and generalizes Theorem
\ref{tu}. First of all let us see that the assumption that $F$ separates points of $G$ is crucial in the proof that \cite{DiJaTo} provides for Theorem
\ref{tu}.

\begin{propositionnew}
Let $K$ a compact Hausdorff space and let $F$ be a closed subspace of $C(K)$ that separates points of $K$. If $Y$ is a Banach space and $u \colon F \longrightarrow Y$ is a $p$-summing linear operator, then there is a Borel probability measure $\mu$ on $K$ such that
$$\|u(f)\|^p \leq \pi_p(u)^p \cdot \int_K |f(x)|^p d\mu(x){\rm ~for~every~} f \in F. $$
\end{propositionnew}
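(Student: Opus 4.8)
The plan is to first invoke the Pietsch machinery to obtain a dominating measure on the dual ball $B_{F^{*}}$ and then to transport it to a measure on $K$ itself through the canonical evaluation embedding of $K$ into $B_{F^{*}}$; it is precisely this transport that forces $F$ to separate the points of $K$. The point is that the abstract Pietsch Domination Theorem produces its measure on a compact index space, and the most natural such space is $(B_{F^{*}},\sigma(F^{*},F))$ rather than $K$, so the whole difficulty is to realize a dominating measure that is carried by (a faithful copy of) $K$.

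First I would consider the evaluation map $e\colon K\longrightarrow (B_{F^{*}},\sigma(F^{*},F))$ given by $e(x)=\delta_{x}|_{F}$, so that $e(x)(f)=f(x)$ for every $f\in F$. This map is weak*-continuous, and its image $K_{0}:=e(K)$ is a weak*-compact subset of $B_{F^{*}}$ which is norming for $F$, since $\sup_{\varphi\in K_{0}}|\varphi(f)|=\sup_{x\in K}|f(x)|=\|f\|$ for all $f\in F$. The separation hypothesis enters here and only here: because $F$ separates the points of $K$, the map $e$ is injective, and as $K$ is compact and $B_{F^{*}}$ is Hausdorff, $e$ is a homeomorphism of $K$ onto $K_{0}$.

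Next I would manufacture a dominating measure directly on $K_{0}$. Taking the compact index space in Theorem \ref{gen} to be $K_{0}$, with $R(\varphi,f)=|\varphi(f)|$ and $S(u,f)=\|u(f)\|$, the fact recalled above yields $\sup_{\varphi\in K_{0}}\sum_{j}|\varphi(f_{j})|^{p}=\sup_{x\in K}\sum_{j}|f_{j}(x)|^{p}=\sup_{\varphi\in B_{F^{*}}}\sum_{j}|\varphi(f_{j})|^{p}$, so that the $p$-summing inequality for $u$ becomes exactly the $R$-$S$-abstract $p$-summing inequality relative to $K_{0}$. Theorem \ref{gen} then provides a regular Borel probability measure $\lambda_{0}$ on $K_{0}$ with $\|u(f)\|^{p}\leq \pi_{p}(u)^{p}\int_{K_{0}}|\varphi(f)|^{p}\,d\lambda_{0}(\varphi)$ for every $f\in F$.

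The main obstacle, and the crux of the whole statement, is the final transport of $\lambda_{0}$ from $K_{0}$ back to $K$; I would carry it out by setting $\mu(A):=\lambda_{0}(e(A))$ for Borel $A\subseteq K$, which is a regular Borel probability measure precisely because $e$ is a homeomorphism. A change of variables, together with $e(x)(f)=f(x)$, then converts the inequality for $\lambda_{0}$ into $\|u(f)\|^{p}\leq \pi_{p}(u)^{p}\int_{K}|f(x)|^{p}\,d\mu(x)$, as desired. I expect this last step to be the delicate one: it is legitimate only because $e$ identifies $K$ with $K_{0}$, i.e. only because $F$ separates points, and this is exactly the place at which the proof given in \cite{DiJaTo} cannot do without that hypothesis.
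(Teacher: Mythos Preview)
Your proposal is correct and follows essentially the same route as the paper: both define the evaluation embedding $K\to B_{F^*}$, note that its image is weak*-compact and norming, obtain a Pietsch measure on that image, and then push it back to $K$ via the inverse of the embedding, which exists precisely because $F$ separates points. The only cosmetic difference is that the paper invokes the classical Pietsch Domination Theorem \cite[Theorem 2.12]{DiJaTo} to get the measure on $\delta(K)$, whereas you appeal to the abstract Theorem~\ref{gen}; the two yield the same conclusion here.
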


\begin{proof} Consider the mapping
 $$\delta \colon K \longrightarrow F^*~,~\delta(x) = \delta_x \colon F \longrightarrow \mathbb{K}~,~\delta_x(f) = f(x).$$
It is easy to see that (i) $\delta(K) \subseteq B_{F^*}$,
 (ii) $\delta(K)$ is norming for $F$, (iii) $\delta(K)$ is weak* compact in $F^*$. So, by the Pietsch Domination Theorem \cite[Theorem 2.12]{DiJaTo} there is a Borel probability measure $\nu$ on $(\delta(K), w^*)$ such that
$$\|u(f)\|^p \leq \pi_p(u)^p \cdot \int_{\delta(K)} |y(f)|^p d\nu(y){\rm ~for~every~} f \in F. $$
Since $F$ separates points of $K$, it follows that $\delta$ is injective and the inverse function $\delta^{-1}\colon \delta(K) \longrightarrow K $
is obviously measurable considering the Borel sets in $K$ and in $(\delta(K), w^*)$. Let $\mu$ be the image measure with respect to $\delta^{-1}$, that is, $\mu$ is a Borel measure on $K$ and $\mu(A) = \nu(\delta(A))$. It is clear that $\mu$ is a probability measure and by \cite[Proposition 9.1]{Folland} we have that
\begin{align*}\int_K |f(x)|^p d\mu(x) &= \int_{\delta(K)} |f(\delta^{-1}(y))|^p d\nu(y) = \int_{\delta(K)} |\delta(\delta^{-1}(y))(f)|^p d\nu(y)\\
&= \int_{\delta(K)} |y(f)|^p d\nu(y) \geq \frac{\|u(f)\|^p}{\pi_p(u)^p}
\end{align*}
for every $f \in F$.
\end{proof}

The proof of the proposition above makes clear that the assumption that $F$ separates points of $G$ is needed to validate the assertion made in the first three lines of the proof of \cite[Theorem, page 56]{DiJaTo}. Let us see that this assumption can be dropped with the help of Theorem \ref{genHaar}:

\begin{theoremnew}
\label{tugen} If $X$ is a Banach space, $G$ is a compact Hausdorff topological
group, $F$ is a closed translation invariant subspace of $C(G)$ and $u \colon
F\longrightarrow X$ is a translation invariant $p$-summing linear operator,
then the normalized Haar measure $\sigma_G$ on $G$ is a Pietsch measure for $u$ in the sense that
$$\|u(f)\| \leq \pi_p(u) \cdot \left(\int_G |f(x)|^p \,d\sigma_G(x)\right)^\frac1p {\rm ~for~every~} f \in F. $$
\end{theoremnew}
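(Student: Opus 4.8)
The plan is to derive Theorem \ref{tugen} as a special case of the abstract Theorem \ref{genHaar} by choosing the data $R$ and $S$ appropriately, so that $R$-$S$-abstract $p$-summing reduces exactly to the classical $p$-summing condition and conditions (ii) and (iii) hold automatically. First I would set $K = G$, let $E = F$ (the closed translation invariant subspace), and take $Z$ to be a one-point set (or any fixed singleton $\{b_0\}$) so that the auxiliary parameter $b$ plays no role. The natural choice is $S(u,f,b) := \|u(f)\|$ and $R(\varphi,f,b) := |f(\varphi)|$ for $\varphi \in G$ and $f \in F$. With these choices I would verify that $R_{f,b}(\varphi) = |f(\varphi)|$ is continuous on $G$ (immediate, since $f \in C(G)$), which is the standing hypothesis on $R$.

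Next I would check that, under these identifications, the $R$-$S$-abstract $p$-summing condition coincides with the ordinary $p$-summing condition for $u$. Concretely, the defining inequality becomes
\[
\left(\sum_{j=1}^m \|u(f_j)\|^p\right)^{1/p} \leq C \cdot \left(\sup_{\varphi \in G} \sum_{j=1}^m |f_j(\varphi)|^p\right)^{1/p},
\]
and the displayed identity recalled at the start of Section \ref{applications} shows that the right-hand supremum equals $\sup_{\alpha \in B_{F^*}} \sum_j |\alpha(f_j)|^p$. Hence this is precisely the statement that $u$ is $p$-summing with $\pi_{RS,p}(u) = \pi_p(u)$. This is the step where the hypothesis that $F$ separates points is \emph{not} needed: the key identity rests only on Hahn--Banach and a point-mass argument, not on injectivity of the evaluation map $\delta$, which is exactly what caused the trouble in the proof from \cite{DiJaTo}.

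The two structural conditions of Theorem \ref{genHaar} are then routine to verify, and I expect no serious obstacle here. For condition (ii), $R(\varphi, f^\phi, b) = |f^\phi(\varphi)| = |f(\phi\varphi)| = R(\phi\varphi, f, b)$, so the required inequality holds with equality. For condition (iii), $S_{u,b}(f) = \|u(f)\|$, and since $u$ is translation invariant we have $u(f) = u(f^\phi)$, whence $S_{u,b}(f) = \|u(f)\| = \|u(f^\phi)\| = S_{u,b}(f^\phi)$, so $S_{u,b}$ is translation invariant. (One minor point: Theorem \ref{genHaar} declares $S_{u,b}$ to take values in $(0,\infty)$; if $u(f) = 0$ for some $f$ the value is $0$, but this is harmless for the domination conclusion and can be absorbed trivially, or one works with the genuine strict range.)

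Finally I would invoke Theorem \ref{genHaar} to conclude that the normalized Haar measure $\sigma_G$ is an $R$-$S$-abstract measure for $u$, which by the dictionary above means exactly
\[
\|u(f)\| = S(u,f,b) \leq \pi_{RS,p}(u) \cdot \left(\int_G |f(x)|^p \, d\sigma_G(x)\right)^{1/p} = \pi_p(u) \cdot \left(\int_G |f(x)|^p \, d\sigma_G(x)\right)^{1/p}
\]
for every $f \in F$, which is the claimed inequality. The real content is entirely front-loaded into the translation to the abstract framework; once the correct $R$ and $S$ are identified, the theorem falls out immediately, and the gain over Theorem \ref{tu} is that the separation hypothesis never enters because the proof of Theorem \ref{genHaar} uses uniqueness of the Haar measure rather than the injectivity of the evaluation embedding.
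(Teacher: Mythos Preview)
Your proposal is correct and follows essentially the same route as the paper: the same choices $R(\varphi,f,b)=|f(\varphi)|$ and $S(u,f,b)=\|u(f)\|$, the same use of the identity at the start of Section~\ref{applications} to match the abstract and classical $p$-summing conditions, and the same verification of (ii) and (iii) before invoking Theorem~\ref{genHaar}. The only cosmetic difference is that the paper takes $Z=\mathbb{K}$ rather than a singleton (recall the framework requires $Z$ to be a Banach space), but since $b$ is a dummy variable here this is immaterial.
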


\begin{proof} Make the following choice for the parameters of Theorem  \ref{genHaar}:
\[
E=F=Y,~K=G,~Z=\mathbb{K}, ~\mathcal{H}={\mathcal{L}}(F;X),
\]
\[
R \colon G\times F\times\mathbb{K}\longrightarrow\lbrack0,\infty)~,~
R(\varphi,T,b)=|T(\varphi)|,\mathrm{~and}%
\]
\[
S\colon{\mathcal{L}}(F;X)\times F\times\mathbb{K}\longrightarrow
\lbrack0,\infty)~,~ S(v,T,b)=\Vert v(T)\Vert.
\]
%\[
%R:G\times F\times\mathbb{R}\rightarrow\lbrack0,\infty),
%\]
%\[
%S:{\mathcal{L}}(F;X)\times F\times\mathbb{R}\rightarrow\lbrack0,\infty)~,~S(f,T,b)=\Vert f(T)\Vert
%\]
%given by
%\[
%R(\varphi,T,b):=|T(\varphi)|
%\]
%and
%\[
%S(f,T,b)=\Vert f(T)\Vert,
%\]
Given $T_1, \ldots, T_m \in F$ and $b_1, \ldots, b_m \in \mathbb{K}$,
%$$\sup_{\alpha \in B_{F^*}} \sum_{j=1}^m |\alpha(T_j)|^p = \sup_{\alpha \in B_{C(G)^*}} \sum_{j=1}^m |\alpha(T_j)|^p = \sup_{x \in G} \sum_{j=1}^m |T_j(x)|^p,$$
%we get
\begin{align*}\sum_{j=1}^m S(u,T_j,b_j)^p &= \sum_{j=1}^m \|u(T_j)\|^p \leq \pi_p(u)^p \cdot \sup_{\alpha \in B_{F^*}} \sum_{j=1}^m |\alpha(T_j)|^p \\& = \pi_p(u)^p \cdot \sup_{x \in G}\sum_{j=1}^m |T_j(x)|^p=  \pi_p(u)^p \cdot \sup_{x \in G}\sum_{j=1}^m R(x,T_j,b_j)^p,
\end{align*}
which proves that $u$ is $R$-$S$-abstract $p$-summing. Let us see that the
conditions of Theorem \ref{genHaar} are satisfied:
\[
R(\varphi,T^{\phi},b)=|T^{\phi}(\varphi)|=|T(\phi\varphi)|=R(\phi
\varphi,T,b),
\]
for all $T\in F$, $\varphi,\phi\in G$ and $b\in Z$; and using that $u$ is
translation invariant,
\[
S_{u,b}(T)=S(u,T,b)=\Vert u(T)\Vert=\Vert u(T^{\phi})\Vert=S(u,T^{\phi
},b)=S_{u,b}(T^{\phi}),
\]
for all $\phi\in G$ and $b\in Z$. So the normalized Haar measure $\sigma_{G}$
on $G$ is a $R$-$S$-abstract measure for $u$. Then
\begin{align*}
\Vert u(T)\Vert^{p}  &  =S(u,T,b)^{p} \leq\pi_{RS,p}(u)^{p}\cdot\int_{G}%
R(\phi,T,b)^{p}\,d\sigma_{G}(\phi) \\&=\pi_{RS,p}(u)^{p}\cdot\int_{G}|T(\phi
)|^{p}\,d\sigma_{G}(\phi),
\end{align*}
for every $T\in F$; therefore $\sigma_{G}$ is a Pietsch measure for $u$.
\end{proof}
%%%%%%%%%%%%%%%%%%%%%%%%%%%%%%%%%%%%%%%%

\subsection{Dominated homogeneous polynomials}

For the definition of dominated homogeneous polynomials and the corresponding
Pietsch Domination Theorem, see \cite[Definition 3.2 and Proposition
3.1]{anais} or, without proof, \cite[Definition 2.1 and Theorem 3.3]{jfa}.

Let $F$ be a closed translation invariant subspace of $C(G)$ and let $P\colon
F\longrightarrow X$ be a translation invariant $p$-dominated $n$-homogeneous
polynomial. Choose%
\[
E=F=Y,~K=G,~Z=\mathbb{K},
\]
$\mathcal{H}$ to be the Banach space ${\mathcal{P}}(^{n} F;X)$ of continuous
$n $-homogeneous polynomials from $F$ to $X$ with the usual sup norm,
\[
R \colon G\times F\times\mathbb{K}\longrightarrow\lbrack0,\infty
)~,~R(\varphi,T,b)=|T(\varphi)|, \mathrm{~and}%
\]%
\[
S \colon{\mathcal{P}}(^{n}F;X)\times F\times\mathbb{K}\longrightarrow
\lbrack0,\infty)~,~S(Q,T,b)=\Vert Q(T)\Vert^{1/n}.
\]
%given by
%\[
%R(\varphi,T,b):=|T(\varphi)|
%\]
%and
%\[
%S(Q,T,b)=\Vert Q(T)\Vert^{1/n}.
%\]
Since $P$ is $p$-dominated, there is a constant $C$ such that, for all $T_1, \ldots, T_n \in F$ and $b_1, \ldots, b_m \in \mathbb{K}$,%
\begin{align*} \sum_{i=1}^{k} S(P,T_{i},b_{i})^{p} &= \sum_{i=1}^{k}\Vert P(T_{i})\Vert^{p/n} \leq C^p \cdot \sup_{\varphi\in B_{F^{\ast}}}\sum
_{i=1}^{k}|\varphi(T_{i})|^{p} = C^p \cdot \sup_{\varphi\in K}%
\sum_{i=1}^{k} |T_{i}(\varphi)|^{p}\\
& = C^p \cdot  \sup_{\varphi\in K}\sum
_{i=1}^{k} R(\varphi_{i},T_{i},b_{i})^{p}.
\end{align*}
%
%\[
%\sum_{i=1}^{k} S(P,T_{i},b_{i})^{p}\leq C^{p}\sup_{\varphi\in K}\sum
%_{i=1}^{k} R(\varphi_{i},T_{i},b_{i})^{p}%
%\]
%is equivalent to%
%\begin{align*}
%\sum_{i=1}^{k}\Vert P(T_{i})\Vert^{p/n}  &  \leq C^{p}\sup_{\varphi\in K}%
%\sum_{i=1}^{k} |T_{i}(\varphi)|^{p} =C^{p}\sup_{\varphi\in B_{C(G)^{\ast}}%
%}\sum_{i=1}^{k}|\varphi(T_{i})|^{p} =\sup_{\varphi\in B_{F^{\ast}}}\sum
%_{i=1}^{k}|\varphi(T_{i})|^{p}.
%\end{align*}
So $P$ is $R$-$S$-abstract $p$-summing. Note also that
\[
R(\varphi,T^{\phi},b)=|T^{\phi}(\varphi)|=|T(\phi\varphi)|=R(\phi\varphi,T,b)
\]
for all $T\in F$, $\varphi,\phi\in G$ and $b\in Z$, and using that $P$ is
translation invariant,
\[
S_{P,b}(T)=S(P,T,b)=\Vert P(T)\Vert^{1/n}=\Vert P(T^{\phi})\Vert
^{1/n}=S(P,T^{\phi},b)=S_{P,b}(T^{\phi}),
\]
for all $T \in F$ and $b \in Z$. By Theorem \ref{genHaar} we obtain that the
normalized Haar measure $\sigma_{G}$ on $G$ is a $R$-$S$-abstract measure for
$P$. Then
\begin{align*}
\Vert P(T)\Vert^{p}  &  =S(P,T,b)^{p} \leq\pi_{RS,p}(P)^{p}\cdot \int_{G}%
R(\phi,T,b)^{p}\,d\sigma_{G}(\phi)\\& =\pi_{RS,p}(P)^{p}\cdot \int_{G}|T(\phi
)|^{p}\,d\sigma_{G}(\phi),
\end{align*}
for every $T\in F$; therefore $\sigma_{G}$ is a Pietsch measure for $P$.

\subsection{$\alpha$-subhomogeneous mappings}

For the definition of $\alpha$-subhomogeneous mappings and the corresponding
Pietsch Domination Theorem we refer to \cite[Definition 3.1 and Theorem
2.4]{mona}.

Let $F$ be a closed translation invariant subspace of $C(G)$ and let $f \colon
F\longrightarrow X$ be a translation invariant $\alpha$-subhomogeneous
mapping. Choose%
\[
E=F=Y,~ K=G,~Z=\mathbb{K},~\mathcal{H}=\left\{  h\colon F\longrightarrow
Y:\text{ }h\text{ is }\alpha\text{-subhomogeneous}\right\}  ,
\]
\[
R\colon G\times F\times\mathbb{K}\longrightarrow\lbrack0,\infty)~,~R(\varphi
,T,b)=|T(\varphi)|,
\]%
\[
S \colon\mathcal{H}\times F\times\mathbb{K}\longrightarrow\lbrack
0,\infty)~,~S(h,T,b)=\Vert h(T)\Vert^{1/\alpha}.
\]
%given by
%\[
%R(\varphi,T,b):=|T(\varphi)|
%\]
%and
%\[
%S(Q,T,b)=\Vert Q(T)\Vert^{1/\alpha}.
%\]
Since $f$ is $\alpha$-subhomogeneous, there is a constant $C$ such that, for all $T_1, \ldots, T_k \in F$ and $b_1, \ldots, b_k \in \mathbb{K}$,
\begin{align*} \sum_{i=1}^{k} S(f,T_{i},b_{i})^{p}& = \sum_{i=1}^{k}\Vert f(T_{i})\Vert^{p/\alpha} \leq C^p \cdot \sup_{\varphi\in B_{F^{\ast}}}\sum
_{i=1}^{k}|\varphi(T_{i})|^{p} = C^p \cdot \sup_{\varphi\in
K}\sum_{i=1}^{k} |T_{i}(\varphi)|^{p}\\
& = C^p \cdot \sup_{\varphi\in K}\sum_{i=1}^{k}
R(\varphi_{i},T_{i},b_{i})^{p}.
\end{align*}
%\[
%\sum_{i=1}^{k} S(f,T_{i},b_{i})^{p}\leq C^{p}\sup_{\varphi\in K}\sum_{i=1}^{k}
%R(\varphi_{i},T_{i},b_{i})^{p}%
%\]
%is equivalent to%
%\begin{align*}
%\sum_{i=1}^{k}\Vert f(T_{i})\Vert^{p/\alpha}  &  \leq C^{p}\sup_{\varphi\in
%K}\sum_{i=1}^{k} |T_{i}(\varphi)|^{p} =C^{p}\sup_{\varphi\in B_{C(G)^{\ast}}%
%}\sum_{i=1}^{k}|\varphi(T_{i})|^{p}=\sup_{\varphi\in B_{F^{\ast}}}\sum
%_{i=1}^{k}|\varphi(T_{i})|^{p}.
%\end{align*}
So $f$ is $R$-$S$-abstract $p$-summing. Note also that
\[
R(\varphi,T^{\phi},b)=|T^{\phi}(\varphi)|=|T(\phi\varphi)|=R(\phi
\varphi,T,b),
\]
for every $T\in F$, $\varphi,\phi\in G$ and $b\in Z$, and using that $f$ is
translation invariant,
\[
S_{f,b}(T)=S(f,T,b)=\Vert f(T)\Vert^{1/\alpha}=\Vert f(T^{\phi})\Vert
^{1/\alpha}=S(f,T^{\phi},b)=S_{f,b}(T^{\phi}),
\]
for every $T \in F$ and $b \in Z$. By Theorem \ref{genHaar} we obtain that the
normalized Haar measure $\sigma_{G}$ on $G$ is a $R$-$S$-abstract measure for
$f$. Then
\begin{align*}
\Vert f(T)\Vert^{\frac{p}{\alpha}}  &  =S(f,T,b)^{p} \leq\pi_{RS,p}(f)^{p}\cdot%
\int_{G}R(\phi,T,b)^{p}\,d\sigma_{G}(\phi) \\&=\pi_{RS,p}(f)^{p}\cdot\int_{G}%
|T(\phi)|^{p}\,d\sigma_{G}(\phi),
\end{align*}
for every $T\in F$; therefore $\sigma_{G}$ is a Pietsch measure for $f$.

\subsection{Absolutely summing arbitrary mappings}\label{arbitrary}

%For the definition of absolutely summing arbitrary mappings we refer to \cite%
%[Definition 2.1 and Theorem 2.2]{BoPeRu}.

Let $X$ and $F$ be Banach spaces. Following \cite%
[Definition 2.1]{BoPeRu} (see also  \cite[Definition 3.1]{matos}), an arbitrary mapping $f\colon F\longrightarrow X$ is {\it
absolutely $p$-summing at $a\in F$} if there is a
$C\geq0$ so that%
\[%
%TCIMACRO{\dsum \limits_{j=1}^{m}}%
%BeginExpansion
{\displaystyle\sum\limits_{j=1}^{m}}
%EndExpansion
\left\Vert f(a+x_{j})-f(a)\right\Vert ^{p}\leq C\cdot\sup_{\varphi\in B_{E^{\prime
}}}%
%TCIMACRO{\dsum \limits_{j=1}^{m}}%
%BeginExpansion
{\displaystyle\sum\limits_{j=1}^{m}}
%EndExpansion
\left\vert \varphi(x_{j})\right\vert ^{p}%
\]
for every natural number $m$ and all $x_{1},\ldots,x_{m} \in F$. The Pietsch Domination Theorem for absolutely $p$-summing mappings at $a \in F$ can be found in \cite[Theorem 4.2]{BoPeRu}.

Assume that $F$ is a closed translation invariant subspace of $C(G)$ and let $%
f\colon F\longrightarrow X$ be a translation invariant absolutely
$p$-summing mapping at $0\in F$. Choose%
\begin{equation*}
E=F=Y,~K=G,~Z=\mathbb{K},~\mathcal{H}=X^{F}=\left\{ h\colon F\longrightarrow
X\right\} ,
\end{equation*}%
\begin{equation*}
R\colon G\times F\times \mathbb{K}\longrightarrow \lbrack 0,\infty
),~~R(\varphi ,T,b)=|T(\varphi )|,
\end{equation*}%
\begin{equation*}
S\colon \mathcal{H}\times F\times \mathbb{K}\longrightarrow \lbrack 0,\infty
)~,~S(h,T,b)=\Vert h(T)-h(0)\Vert .
\end{equation*}%
%
%
%
%
%given by
%\[
%R(\varphi,T,b):=|T(\varphi)|
%\]
%and
%\[
%S(u,T,b)=\Vert u(T+T_{0})-u(T_{0})\Vert.
%\]
Since $f$ is absolutely $p$-summing at $0 \in F$, there is a constant $C$ such that, for all $T_1, \ldots, T_k \in F$ and $b_1, \ldots, b_k \in \mathbb{K}$,
\begin{align*} \sum_{i=1}^{k}S(f,T_{i},b_{i})^{p}&= \sum_{i=1}^{k}\Vert f(T_{i})-f(0)\Vert ^{p} \leq C^p \cdot \sup_{\varphi \in B_{F^{\ast }}}\sum_{i=1}^{k}|\varphi (T_{i})|^{p}\\
&= C^p \cdot \sup_{\varphi \in
K}\sum_{i=1}^{k}|T_{i}(\varphi )|^{p} = C^p \cdot \sup_{\varphi \in
K}\sum_{i=1}^{k}R(\varphi _{i},T_{i},b_{i})^{p}.
\end{align*}
%
%\begin{equation*}
%\sum_{i=1}^{k}S(f,T_{i},b_{i})^{p}\leq C^{p}\sup_{\varphi \in
%K}\sum_{i=1}^{k}R(\varphi _{i},T_{i},b_{i})^{p}
%\end{equation*}%
%is equivalent to%
%\begin{align*}
%\sum_{i=1}^{k}\Vert f(T_{i})-f(0)\Vert ^{p}& \leq C^{p}\sup_{\varphi \in
%K}\sum_{i=1}^{k}|T_{i}(\varphi )|^{p}=C^{p}\sup_{\varphi \in B_{C(G)^{\ast
%}}}\sum_{i=1}^{k}|\varphi (T_{i})|^{p} \\
%& =\sup_{\varphi \in B_{F^{\ast }}}\sum_{i=1}^{k}|\varphi (T_{i})|^{p}.
%\end{align*}%
So $f$ is $R$-$S$-abstract $p$-summing. Note also that
\begin{equation*}
R(\varphi ,T^{\phi },b)=|T^{\phi }(\varphi )|=|T(\phi \varphi )|=R(\phi
\varphi ,T,b)
\end{equation*}%
for all $T\in F$, $\varphi ,\phi \in G$ and $b\in Z$. Using that $f$ is
translation invariant,%
\begin{align*}
S_{f,b}(T^{\phi })& =S(f,T^{\phi },b)=\Vert f(T^{\phi })-f(0)\Vert \\
& =\Vert f(T)-f(0)\Vert =S(f,T,b)=S_{f,b}(T),
\end{align*}%
for all $T\in F$ and $b\in Z$. By Theorem \ref{genHaar} we obtain that the
normalized Haar measure $\sigma _{G}$ on $G$ is a $R$-$S$-abstract measure
for $f$. Then
\begin{align*}
\Vert f(T)-f(0)\Vert ^{p}& =S(f,T,b)^{p}\leq \pi
_{RS,p}(f)^{p}\cdot\int_{G}R(\phi ,T,b)^{p}\,d\sigma _{G}(\phi ) \\
& =\pi _{RS,p}(f)^{p}\cdot\int_{G}|T(\phi )|^{p}\,d\sigma _{G}(\phi ),
\end{align*}%
for every $T\in F$; therefore $\sigma _{G}$ is a Pietsch measure for $f$.

\section{Open Problems}

For the definition of {\it Lipschitz $p$-summing mappings} and the corresponding Pietsch Domination Theorem we refer to \cite{FaJo, BoPeRu}.

\medskip

\noindent {\bf Problem.} Let $F$ be a closed translation invariant subspace of $C(G)$, let $X$ be a metric space and $f \colon F\longrightarrow X$ be a translation invariant Lipschitz $p$-summing mapping. Is the Haar measure $\sigma_G$ a Pietsch measure for $f$?

\bigskip

In the case of absolutely summing arbitrary mappings (Subsection \ref{arbitrary}) we assumed that the translation invariant mapping $f \colon F \subseteq C(G) \longrightarrow X$ is absolutely $p$-summing at the origin. What about translation invariant mappings that are absolutely $p$-summing at some $0 \neq T_0 \in F$? We say that a vector $T_0 \in F$ is {\it translation invariant} if $T_{0}^{\phi }=T_{0}$ for every $\phi \in G.$ Let $T_0\in F$. Define
$$f_{T_0} \colon F \longrightarrow X~,~f_{T_0}(T) = f(T + T_0).  $$
It is easy to see that $f$ is absolutely $p$-summing at $T_0$  if and only if $f_{T_0}$ is absolutely $p$-summing at the origin. Besides, if $T_0$ is translation invariant then $f$ is translation invariant  if and only if $f_{T_0}$ is translation invariant. Thus, if we assume that $f$ is translation invariant and absolutely $p$-summing at $T_0$, then $f_{T_0}$ is translation invariant and absolutely $p$-summing at the origin. Therefore the Haar measure is a Pietsch measure for $f_{T_0}$ and hence for $f$.
%\begin{center} $f$ is translation invariant and absolutely $p$-summing at a translation invariant vector $T_0 \in F$ if and only if $f_{T_0}$ is translation %invariant and absolutely $p$-summing at the origin.
%\end{center}
Anyway we need the extra assumption that the vector $T_0$ is translation invariant. Can this assumption be dropped?

\medskip

\noindent {\bf Problem.} Let $F$ be a closed translation invariant subspace of $C(G)$ and let $%
f\colon F\longrightarrow X$ be a translation invariant mapping that is  absolutely
$p$-summing at some vector of $F$. Is the Haar measure $\sigma_G$ a Pietsch measure for $f$?

%%%%%%%%%%%%%%%%%%%%%%%%%%%%%%%%%%%%%%%%%%%%%%%%%%%%
\bigskip
\bigskip

\noindent\textbf{Acknowledgements.} The authors thank J. Diestel for drawing our attention to this subject.

\bigskip

\noindent[Geraldo Botelho] Faculdade de Matem\'{a}tica, Universidade Federal
de Uberl\^{a}ndia, 38.400-902 -- Uberl\^{a}ndia -- Brazil, e-mail: botelho@ufu.br

\medskip

\noindent[Daniel Pellegrino] Departamento de Matem\'{a}tica, Universidade
Federal da Para\'{\i}ba, 58.051-900 -- Jo\~{a}o Pessoa -- Brazil, e-mail: dmpellegrino@gmail.com

\medskip

\noindent[Pilar Rueda] Departamento de An\'{a}lisis Matem\'{a}tico, Universidad de
Valencia, 46.100 -- Burjasot -- Valencia -- Spain, e-mail: pilar.rueda@uv.es

\medskip

\noindent[Joedson Santos] Departamento de Matem\'{a}tica, Universidade Federal
de Sergipe, 49.500-000 -- Itabaiana -- Brazil, e-mail: joedsonsr@yahoo.com.br.

\medskip

\noindent[Juan Benigno Seoane-Sep\'{u}lveda] Departamento de An\'{a}lisis
Matem\'{a}tico, Facultad de Ciencias Matem\'{a}ticas, Plaza de Ciencias 3,
Universidad Complutense de Madrid, Madrid -- 28040 -- Spain, e-mail:jseoane@mat.ucm.es

\end{document}